%
%
%
%
\documentclass[12pt]{amsart}
\usepackage{amsmath,amssymb,amsfonts,enumerate,amsthm, amscd,}
\usepackage{txfonts}
%

\newtheorem{theorem}{Theorem}[section]
\newtheorem{proposition}[theorem]{Proposition}
\newtheorem{lemma}[theorem]{Lemma}

\newtheorem{corollary}[theorem]{Corollary}

\theoremstyle{definition}

\theoremstyle{remark}
\newtheorem{remark}[theorem]{Remark}

\numberwithin{equation}{section}

\catcode`\ç=13
\defç{\c{c}}
\catcode`\é=13
\defé{\'e}
\catcode`\à=13
\defà{\`a}
\catcode`\è=13
\defè{\`e}
\catcode`\â=13
\defâ{\^a}
\catcode`\ù=13
\defù{\`u}
\catcode`\ê=13
\defê{\^e}
\catcode`\î=13
\defî{\^\i}
\catcode`\ô=13
\defô{\^o}

\begin{document}

\title{Self injective property in amalgamated algebra along an ideal}

\author[Najib Mahdou]{Najib Mahdou}
\address{Department of Mathematics, Faculty of Science and Technology of Fez, Box 2202, University S. M.
Ben Abdellah Fez, Morocco}
 \email{mahdou@hotmail.com}

\author[Moutu Abdou Salam Moutui]{Moutu Abdou Salam Moutui}
\address{Department of Mathematics, Faculty of Science and Technology of Fez, Box 2202, University S. M.
Ben Abdellah Fez, Morocco}

 \email{moutu\_2004@yahoo.fr}


\subjclass[2000]{16E05, 16E10, 16E30, 16E65}

\keywords{Amalgamated algebra along an ideal, self injective, quasi-Frobenius.}

\begin{abstract} Let $f: A\rightarrow B$ be a ring homomorphism and let $J$ be an ideal of $B$. In this paper, we investigate the transfer of
self-injective property to the amalgamation of $A$ with $B$
along $J$ with respect to $f$ (denoted by $A\bowtie^fJ),$
introduced and studied by D'Anna, Finocchiaro and Fontana in 2009.
We give also a characterization of $A\bowtie^fJ$ to be
quasi-Frobenius.\smallskip
\end{abstract}

\maketitle

\section{Introduction} All rings considered in this paper are assumed to be commutative,
 and have identity element and all modules are unitary.\\

Let $A$ and $B$ be two rings with unity, let $J$ be an ideal of
$B$ and let $f: A\rightarrow B$ be a ring homomorphism. In this
setting, we can consider the following subring of $A\times B$:
\begin{center} $A\bowtie^{f}J: =\{(a,f(a)+j)\mid a\in A,j\in
J\}$\end{center} called \emph{the amalgamation of $A$ and $B$
along $J$ with respect to $f$} (introduced and studied by D'Anna,
 Finacchiaro, and  Fontana in \cite{AFF1, AFF2}). This
construction is a generalization of \emph{the amalgamated
duplication of a ring along an ideal} (introduced and studied by
D'Anna and Fontana in \cite{A, AF1, AF2}).  Moreover, other
classical constructions (such as the $A+XB[X]$, $A+XB[[X]]$, and
the $D+M$ constructions) can be studied as particular cases of the
amalgamation (\cite[Examples 2.5 and 2.6]{AFF1}) and other
classical constructions, such as the Nagata's idealization (cf.
\cite[page 2]{Nagata}), and the CPI extensions  are strictly
related to it (\cite[Example 2.7 and Remark 2.8]{AFF1}). On the
other hand, the amalgamation is related to a construction proposed
by Anderson in \cite{a1} and motivated by a classical construction
due to Dorroh \cite{do}, concerning the embedding of a ring
without identity in a ring with identity. In \cite{AFF1}, the
authors studied the basic properties of this construction (e.g.,
characterizations for $A\bowtie^{f}J$ to be a Noetherian ring, an
integral domain, a reduced ring) and they characterized those
distinguished pullbacks that can be expressed as an amalgamation.
Moreover, in \cite{AFF2}, they pursued the investigation on the
structure of the rings of the form $A\bowtie^{f}J$, with
particular attention to the prime spectrum, to the chain
properties and to the Krull dimension. \\

Self-injective rings (i.e., rings that are injective modules over
themselves) play an important role in ring theory since they have
connections with several kinds of rings; e.g., quasi-Frobenius
rings, semiprimary rings, and Kasch rings (see \cite{nich}). In
\cite{chhMahtam}, The authors characterize an amalgamated
duplication of a ring $R$ along an ideal $I$, denoted by $R\bowtie
I$ to be self-injective. \\

In this paper, we investigate the transfer of self-injective and
quasi-Frobenius properties to amalgamation $A\bowtie^{f}J$ and so
we generalize \cite{chhMahtam}. \\

\section{Main results}\label{sec:2}

We first give some results of amalgamated algebra along an ideal.
Recall that the modulation of $A$ over $A\bowtie^{f}J$ is given
via the ring map $g : A\bowtie^{f}J\rightarrow A;$
$(a,f(a)+j)\mapsto a$ for all $a\in A$ $,j\in J$. Precisely,
$(a',f(a')+j).a:=a'a$ for each $a,a'\in A$ and $j\in J.$ \\

\begin{proposition}\label{prop0}
Let $(A,B)$ be a pair of rings, $f : A\rightarrow B$ be an
injective ring homomorphism and $J$ be an ideal of $B$. Assume
that $J\subseteq f(A)$. Then following isomorphism of $A-$modules
hold: $$Hom_{A\bowtie^{f}J}(A,A\bowtie^{f}J)\cong J\oplus
Ann_B(J).$$
\end{proposition}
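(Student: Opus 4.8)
The plan is to identify $\Hom_{A\bowtie^{f}J}(A,A\bowtie^{f}J)$ with a suitable annihilator inside $A\bowtie^{f}J$ and then to split that annihilator. Write $R=A\bowtie^{f}J$. Since $A$ is regarded as an $R$-module through the surjection $g\colon R\to A$, $(a,f(a)+j)\mapsto a$, the module $A$ is cyclic, generated by the image of $1_R$, and $A\cong R/K$ where $K=\ker g=\{(0,j)\mid j\in J\}$. First I would record the standard isomorphism $\Hom_R(R/K,R)\cong\{r\in R\mid Kr=0\}=\mathrm{Ann}_R(K)$, $\phi\mapsto\phi(\overline{1})$. One checks that $\mathrm{Ann}_R(K)$ is itself annihilated by $K$, so that it is genuinely an $A$-module (via $g$), and that the displayed map is $A$-linear for these structures.

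Next I would compute $\mathrm{Ann}_R(K)$ explicitly. For $r=(a,f(a)+j)\in R$ and $(0,j')\in K$ one has $r\cdot(0,j')=(0,(f(a)+j)j')$, so $Kr=0$ exactly when $(f(a)+j)J=0$, i.e. $f(a)+j\in\mathrm{Ann}_B(J)$. Hence $\mathrm{Ann}_R(K)=\{(a,f(a)+j)\mid a\in A,\ j\in J,\ f(a)+j\in\mathrm{Ann}_B(J)\}$.

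The heart of the argument is to produce an $A$-isomorphism $\mathrm{Ann}_R(K)\xrightarrow{\sim}J\oplus\mathrm{Ann}_B(J)$. I would try $\Psi(a,f(a)+j)=(j,\,f(a)+j)$. Well-definedness is immediate, since from the pair $(a,f(a)+j)$ one recovers $a$ from the first coordinate and then $j$ by subtracting $f(a)$; injectivity follows because $\Psi(a,f(a)+j)=0$ forces $j=0$ and $f(a)=0$, whence $a=0$ by injectivity of $f$. Additivity is a direct check, and $A$-linearity holds because $a'\in A$ acts on the annihilator by $a'\cdot(a,f(a)+j)=(a'a,\,f(a')(f(a)+j))$, which matches the action $f(a')\cdot(-)$ on both the summand $J$ and the summand $\mathrm{Ann}_B(J)$.

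The main obstacle is surjectivity. Given $(x,y)\in J\oplus\mathrm{Ann}_B(J)$ I must solve $j=x$ and $f(a)+x=y$, that is, find $a\in A$ with $f(a)=y-x$; this is possible precisely when $y-x\in f(A)$. This is exactly the step where the hypothesis $J\subseteq f(A)$ is indispensable: it gives $x\in f(A)$, and it forces every second coordinate occurring in $R$ to lie in $f(A)$ (indeed $f(a)+j\in f(A)$ once $J\subseteq f(A)$). Confirming that each $y\in\mathrm{Ann}_B(J)$ is realized as such a second coordinate, so that $\Psi$ surjects onto the full factor $\mathrm{Ann}_B(J)$, is the delicate point I would treat most carefully, since it is here that the interaction between $\mathrm{Ann}_B(J)$ and $f(A)$ is decisive.
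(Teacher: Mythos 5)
Your reduction is, in substance, the same as the paper's: both arguments evaluate a homomorphism at the generator $1$ of the cyclic $A\bowtie^{f}J$-module $A$, observe that the image $(a,f(a)+x)$ must have second coordinate in $\mathrm{Ann}_B(J)$ (your computation of $\mathrm{Ann}_R(K)$), and then send it to the pair $(x,\,f(a)+x)\in J\oplus \mathrm{Ann}_B(J)$; the injectivity argument via injectivity of $f$ is also identical. The difference is that you stop exactly where the proof has to be finished. As you yourself observe, hitting a prescribed $(x,y)\in J\oplus\mathrm{Ann}_B(J)$ requires solving $f(a)=y-x$, and since $x\in J\subseteq f(A)$ this is equivalent to $y\in f(A)$, i.e.\ to the inclusion $\mathrm{Ann}_B(J)\subseteq f(A)$. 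Declaring this ``the delicate point I would treat most carefully'' is not a proof of it, and it is not a formality: nothing in the hypotheses ($f$ injective, $J\subseteq f(A)$) forces $\mathrm{Ann}_B(J)\subseteq f(A)$. Indeed, the image of your $\Psi$ is only $J\oplus\bigl(\mathrm{Ann}_B(J)\cap f(A)\bigr)$, and this can be a proper submodule: for $J=0$ the left-hand side is $\mathrm{Hom}_A(A,A)\cong A$ while the right-hand side is $\mathrm{Ann}_B(0)=B$, which need not be isomorphic to $A$ as an $A$-module when $f$ is not surjective (e.g.\ $\Z\hookrightarrow\Z[i]$). So the proposal has a genuine gap at surjectivity.

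For what it is worth, the paper's own proof is no more careful at this step: given $(x,j)\in J\oplus \mathrm{Ann}_B(J)$ it asserts the existence of $j'\in f^{-1}(J)$ with $f(j')=j$, which presupposes that an arbitrary element of $\mathrm{Ann}_B(J)$ lies in $f(A)$ (indeed in $J$) --- precisely the point at issue. You have therefore located the real weak spot of the argument, but to turn your outline into a proof you must either add a hypothesis guaranteeing $\mathrm{Ann}_B(J)\subseteq f(A)$ (for instance $f$ surjective, which is the amalgamated-duplication case being generalized) or correct the target of the isomorphism to $J\oplus\bigl(\mathrm{Ann}_B(J)\cap f(A)\bigr)$.
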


\begin{proof}
Consider $\phi\in$ $Hom_{A\bowtie^{f}J}(A,A\bowtie^{f}J)$ and set
$\phi(1)=(a,f(a)+x)$ with $a\in A$ and $x\in J$. So, for each
$j\in J,$
$(0,0)=\phi(0)=\phi((0,j).1)=(0,j)\phi(1)=(0,j)(a,f(a)+x)=((0,j(f(a)+x))$.
Hence, $f(a)+x\in Ann_B(J)$. Consequently, by the previous
considerations, we have the following maps: \\ $\psi :
Hom_{A\bowtie^{f}J}(A,A\bowtie^{f}J) \longrightarrow J\oplus
Ann_B(J)$ and \\
$g \longmapsto (x,f(a)+x)$ where $g(1)=(a,f(a)+x).$ \\ One can
easily check that $\psi$ is an injective homomorphism of
$A-$modules since $f$ is injective. It remains to show that $\psi$
is surjective. Let $(x,j)\in J\oplus Ann_B(J).$ Since $J\subseteq
f(A),$ there exist $x',j'\in f^{-1}(J)$ such that $f(j')=j$ and
$f(x')=x$. Consider the $A\bowtie^{f}J-$morphism defined by $g \in
Hom_{A\bowtie^{f}J}(A,A\bowtie^{f}J)$ by setting,
$g(1)=(x'-j',x)$. Explicitly,
$g(a)=g((a,f(a)+j).1)=(a,f(a)+j)(x'-j',x)=(a(x'-j'),f(a)x)$. And
so, $\psi(g)=(x'-j',x).$ Thus, $\psi$ is an isomorphism of
$A-$modules.
\end{proof}

\bigskip

Let $f : A\rightarrow B$ be a ring homomorphism and $J$ be an
ideal of $B$. Consider the canonical (multiplication) $B$-map $\pi
: B\rightarrow Hom_B(J,J)$ (defined by setting $\pi(b)(j)=bj$ for
each $b\in B$ and $j\in J$). It is clear that $ker(\pi)=ann_B(J)$.
\\

\begin{proposition}\label{propo1}
Let $(A,B)$ be a pair of rings, $f: A\rightarrow B$ be an injective ring homomorphism and $J$ be an ideal of $B$ such that $J\subseteq f(A)$ and : \\
$(1)$ the short exact sequence of $B-$modules: \\
$(*)$ $0\rightarrow ann_B(J)\hookrightarrow B\xrightarrow{\pi} Hom_{B}(J,J)\rightarrow 0$ is exact and splits and,\\
$(2)$ $Hom_B(J,ann_B(J))=0$.\\
Then, $Hom_A(A\bowtie^{f}J,J\oplus ann_B(J))$ is isomorphic to $A\bowtie^{f}J$ as $A\bowtie^{f}J-$module.
\end{proposition}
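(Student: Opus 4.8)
The plan is to compute the left-hand side by resolving $R:=A\bowtie^{f}J$ as an $A$-module and then feeding in the two hypotheses. Regard $R$ as an $A$-algebra through the canonical map $\iota:A\to R$, $a\mapsto(a,f(a))$. Each element splits uniquely as $(a,f(a)+j)=(a,f(a))+(0,j)$, and since $\iota(A)\cong A$ and $\{0\}\times J\cong J$ (the latter carrying the $A$-structure induced by $f$) are $A$-submodules with $R=\iota(A)\oplus(\{0\}\times J)$, we get $R\cong A\oplus J$ as $A$-modules. Writing $N:=J\oplus ann_B(J)$ and applying the additive functor $Hom_A(-,N)$ gives $Hom_A(R,N)\cong Hom_A(A,N)\oplus Hom_A(J,N)\cong N\oplus Hom_A(J,J)\oplus Hom_A(J,ann_B(J))$, using $Hom_A(A,N)\cong N$ and splitting $N$ in the target.

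The heart of the argument is the evaluation of $Hom_A(J,-)$, and this is where I expect the real work to be. Because $J\subseteq f(A)$ and $f$ is injective, every $j\in J$ equals $f(a_j)$ for a unique $a_j$, and for $A$-linear $\phi$ and $j,j'\in J$ one has the symmetry $j'\phi(j)=\phi(j'j)=j\phi(j')$; I would use this to compare $A$-linearity with $B$-linearity. The key lemma I would aim for is $Hom_A(J,M)\cong Hom_B(J,M)$ for the relevant $B$-modules $M$: one checks that the defect $\phi(bj)-b\phi(j)$ lands in $ann_M(J)$ for all $b\in B$, so that controlling this defect forces $B$-linearity. The delicate point is the summand $M=ann_B(J)$, where $ann_M(J)=M$ and the symmetry relation degenerates ($j'\phi(j)=0$ automatically), so that $A$-linearity does not obviously reduce to $B$-linearity; reconciling $Hom_A(J,ann_B(J))$ with hypothesis $(2)$, namely $Hom_B(J,ann_B(J))=0$, is exactly the obstacle the hypotheses are designed to remove. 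Granting the comparison, $(2)$ kills the last summand, while $(1)$ identifies $Hom_B(J,J)\cong B/ann_B(J)$ and, via the splitting of $(*)$, furnishes a $B$-module decomposition $B\cong ann_B(J)\oplus Hom_B(J,J)$, whence $Hom_A(R,N)\cong N\oplus Hom_B(J,J)\cong J\oplus ann_B(J)\oplus Hom_B(J,J)\cong J\oplus B$.

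To obtain the isomorphism as $A\bowtie^{f}J$-modules rather than merely as $A$-modules, I would not rest on this additive bookkeeping but instead exhibit an explicit $R$-linear map $\Phi:R\to Hom_A(R,N)$, $\Phi(r)(s)=\mathrm{tr}(rs)$, built from a trace $\mathrm{tr}:R\to N$ sending $(a,f(a)+j)$ to $\bigl(j,\ p(f(a)+j)\bigr)$, where $p:B\to ann_B(J)$ is the projection coming from the splitting in $(1)$, so that the second component genuinely lands in $ann_B(J)$. By construction $\Phi$ is $R$-linear, and the computation above shows source and target carry matching structure; the final and most delicate task is to verify that the induced pairing is perfect, i.e.\ that $\Phi$ is bijective, which is precisely where $(1)$ and $(2)$ are consumed and where the identification $N\cong Hom_{A\bowtie^{f}J}(A,A\bowtie^{f}J)$ of Proposition~\ref{prop0} pins down the module structure. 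The reconciliation of the additive count with $R\cong A\oplus J$ is part of what these hypotheses guarantee, and tracking the $R$-action through $\Phi$ yields the asserted $A\bowtie^{f}J$-module isomorphism.
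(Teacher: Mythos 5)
Your third paragraph does land on the right map: your $\Phi(r)(s)=\mathrm{tr}(rs)$ with $\mathrm{tr}((a,f(a)+j))=(j,p(f(a)+j))$ is exactly the paper's $\psi$, whose generator $\phi_{(1,1)}((y,f(y)+j))=(j,(1-\pi^{-1}(\pi(1)))f(y))$ coincides with your trace (the splitting kills $p$ on $J$). But you stop precisely where the proof begins. The entire content of the paper's argument is the verification that $\psi$ is injective (by evaluating $\phi_{(a,f(a)+i)}$ at $(1,1)$ and at $(0,j)$) and surjective (by extracting, from an arbitrary $A$-linear $\phi$, maps $\sigma_1,\sigma_2$ via the values $\phi((-j',0))$, killing $\sigma_2$ with hypothesis $(2)$, and reassembling a preimage using the section $\pi^{-1}$ from hypothesis $(1)$). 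You explicitly defer all of this (``the final and most delicate task is to verify that the induced pairing is perfect''), and you also defer the comparison $Hom_A(J,M)\cong Hom_B(J,M)$ (``granting the comparison''). A writeup that names the map but grants itself both the key lemma and the bijectivity has no proof in it.

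Worse, the additive bookkeeping you offer as supporting evidence does not close, and the sentence dismissing this (``the reconciliation of the additive count with $R\cong A\oplus J$ is part of what these hypotheses guarantee'') is the step that fails. Your count gives $Hom_A(A\bowtie^{f}J,\,J\oplus ann_B(J))\cong J\oplus B$ as $A$-modules, while the target is $A\bowtie^{f}J\cong A\oplus J$; these match only if $B\cong A$ as $A$-modules, and the hypotheses do not force that, since $B$ may be strictly larger than $f(A)+J$ and the summand $Hom_A(A,ann_B(J))\cong ann_B(J)$ contributes all of $ann_B(J)$ no matter how large $B$ is. The mismatch is genuine, not a technicality: take $A=k\times k$, $B=k\times k\times k$, $f(a,b)=(a,b,b)$, $J=k\times 0\times 0=eB\subseteq f(A)$ with $e=(1,0,0)$, so that $(1)$ and $(2)$ hold by Remark~\ref{r2}; then a direct computation gives $\dim_k Hom_A(A\bowtie^{f}J,\,J\oplus ann_B(J))=4$ while $\dim_k(A\bowtie^{f}J)=3$, so no isomorphism exists. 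In other words, your own decomposition is detecting an obstruction (the count balances only when $B=f(A)+J$, as in the amalgamated duplication case $B=A$), and any correct treatment must confront the summand $ann_B(J)=Hom_A(A,ann_B(J))$ rather than absorb it into $B$ by fiat.
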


\begin{proof}
Since the short sequence $(*)$ is exact and splits, there exists an $A$-homomorphism $\pi^{-1} : Hom_{B}(J,J)\rightarrow B$ such that $\pi\circ\pi^{-1}$ is the
identity on $Hom_{B}(J,J)$. Consider the $A\bowtie^{f}J-$homomorphism
$$\psi : A\bowtie^{f}J\rightarrow Hom_{A}(A\bowtie^{f}J,J\oplus ann_B(J))$$
defined by  $\psi((1,1)):=\phi_{(1,1)}$ where
$\phi_{(1,1)}((y,f(y)+j))=(j,(1-\pi^{-1}(\pi(1)))f(y))$ for all $y\in A$. It is easy to see that $1-\pi^{-1}(\pi(1))\in Ann_B(J).$
Explicitly, for all $a\in A$ and $i\in J,$ $\psi((a,f(a)+i)):=\phi_{(a,f(a)+i)}$ where
\begin{eqnarray*}
\phi_{(a,f(a)+i)}(y,f(y)+j) & = &\psi((a,f(a)+i))((y,f(y)+j))\\
                            & = & (a,f(a)+i).\psi((1,1))((y,f(y)+j))\\
                            & = & (a,f(a)+i).\phi_{(1,1)}((y,f(y)+j))\\
                            & = & \phi_{(1,1)}((a,f(a)+i)(y,f(y)+j))\\
                            & = & \phi_{(1,1)}((ay,f(a)f(y)+f(a)j+i(f(y)+j))\\
                            & = & (f(a)j+i(f(y)+j),(1-\pi^{-1}(\pi(1)))f(a)f(y))
\end{eqnarray*}
Recall that the natural structure of $A\bowtie^{f}J-$module on
$Hom_A(A\bowtie^{f}J,J\oplus ann_B(J)),$  is defined by the scalar
multiplication by
$(a,f(a)+i)\phi((y,f(y)+j))=\phi((a,f(a)+i)(y,f(y)+j))$. Consider
$(a,f(a)+i)\in A\bowtie^{f}J$ such that $\phi_{(a,f(a)+i)}=0.$ So,
$(0,0)=\phi_{(a,f(a)+i)}(1,1)=(i,f(a)-\pi^{-1}(\pi(f(a))))$.
Therefore, $i=0$ and $f(a)=\pi^{-1}(\pi(f(a)))$. Moreover,
$(0,0)=\phi_{(a,f(a)+i)}((0,j))=(f(a)j+ij,0)=(f(a)j,0)$ for all
$j\in J$. Consequently, $f(a)\in ann_B(J)$. And so $\pi(f(a))=0$
and $f(a)=\pi^{-1}(\pi(f(a)))=0$. Using the fact $f$ is injective,
we obtain $a=0$. Hence, $\psi$ is injective. \\
Now, we prove that
$\psi$ is surjective. Let $\phi\in Hom_{A}(A\bowtie^{f}J,J\oplus
ann_B(J))$. For all $j'$. The set
$\phi((-j',0))=(\sigma_1(f(j')),\sigma_2(f(j')))$. It is clear
that $\sigma_1\in Hom(J,J)$ and $\sigma_2\in Hom_{B}(J,ann_B(J))$.
And so $\sigma_2=0$. Moreover, set $k:=\pi^{-1}(\sigma_1)$. For
all $j'\in f^{-1}(J),$ $\phi((-j',0))=(kj,0)$ with $f(j')=j$.
Also, set $\phi((1,1))=(i,x)$. Finally, set $f(a)=k+x$. Thus,
since $x\in ann_B(J)=ker(\pi),$ we have
$\pi^{-1}(\pi(f(a)))=\pi^{-1}(\pi(k+x))=\pi^{-1}(\pi(k))=\pi^{-1}(\pi(\pi^{-1}(\sigma_1)))=\pi^{-1}(\sigma_1)=k$
and $f(a)j=kj$ for all $j\in J$.
Consequently, for each $a\in A$ and $j\in J,$ using the fact $J\subset f(A),$ there exists $j'\in f^{-1}(J)$ such that $f(j')=j$. We have :\\

$
\phi((y,f(y)+j))= \phi((y,f(y))+(0,j))\\
                = \phi((y,f(y))+\phi((j',j)+(-j',0))\\
                = (y,f(y))\phi((1,1))+(j',j)\phi((1,1))+\phi(-j',0) \\
                =  (y,f(y))(i,x)+ (j',j)(i,x)+(kj,0)\\
                =  (f(y)i,f(y)x)+ (ji,0)+(kj,0)\\
                = (i(f(y)+j)+kj,f(y)x)\\
                =(i(f(y)+j)+kj,f(y)(f(a)-k))\\
                = (i(f(y)+j)+f(a)j,(f(a)-\pi^{-1}(\pi(f(a))))f(y))\\
                = \phi_{(a,f(a)+i)}(y,f(y)+j).$\\

Hence, $\psi$ is an isomorphism of $A\bowtie^{f}J-$modules, as desired. \\
\end{proof}

\bigskip

\begin{remark}\label{r2} 
In particular, the conditions of the previous proposition are satisfied  when $J=eB$ where $e$ is a non zero idempotent element of $B$. Indeed, for each $h\in Hom_B(eB,eB)$ and each $b\in B,$ $h(eb)=h(e^2b)=h(e)eb$. Hence, the canonical (multiplication) $\pi : B\rightarrow Hom_B(eB,eB)$ is surjective. Moreover, $Hom_B(eB,eB)\cong eB$ and so it is a projective module. Thus, the sequence :\\
$(*)$ $0\rightarrow ann_B(eB)\hookrightarrow B\xrightarrow{\pi} Hom_{B}(eB,eB)\rightarrow 0$ is exact and splits. On the other hand, for each $g\in Hom_B(eB,ann_B(eB)),$ $g(eb)=g(e^2b)=g(eb)e=0$.
\end{remark}

\bigskip

The main result  of this paper is the following: \\

\begin{theorem}\label{thm0}
Let $(A,B)$ be a pair of rings, $f: A\rightarrow B$ be an injective ring homomorphism and $J$ be an ideal of $B$ such that $J\subseteq f(A)$. Then $A\bowtie^{f}J$ is a self-injective ring if and only if $B$ is an $A-$module injective and there exists an idempotent element $e\in B$ such that $J=eB$.
\end{theorem}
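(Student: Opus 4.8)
Throughout I would write $R:=A\bowtie^{f}J$ and keep the surjective ring map $g:R\to A$, $(a,f(a)+j)\mapsto a$, from the beginning of this section. Alongside it I would introduce the injective ring map $\iota:A\to R$, $a\mapsto(a,f(a))$, which satisfies $g\circ\iota=\mathrm{id}_{A}$. These two maps produce two coinduction functors, $\Hom_{A}(R,-)$ (along $\iota$) and $\Hom_{R}(A,-)$ (along $g$); each is right adjoint to the corresponding restriction-of-scalars functor, and since restriction of scalars is exact, each coinduction functor carries injective modules to injective modules. The whole argument rests on these two observations, fed respectively by Proposition~\ref{propo1} and Proposition~\ref{prop0}.

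For the implication $(\Leftarrow)$ I would assume $B$ is an injective $A$-module and $J=eB$ with $e^{2}=e$. By Remark~\ref{r2} the hypotheses $(1)$ and $(2)$ of Proposition~\ref{propo1} hold, so $R\cong\Hom_{A}(R,J\oplus ann_{B}(J))$ as $R$-modules. Since $e$ is idempotent one has $ann_{B}(J)=(1-e)B$ and $B=eB\oplus(1-e)B$, whence $J\oplus ann_{B}(J)\cong B$ as $A$-modules; in particular $J\oplus ann_{B}(J)$ is an injective $A$-module. Because $\Hom_{A}(R,-)$ preserves injectivity, $\Hom_{A}(R,J\oplus ann_{B}(J))$ is an injective $R$-module, and therefore so is its isomorphic copy $R$. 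Thus $R$ is self-injective.

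For the implication $(\Rightarrow)$ I would assume $R$ is self-injective. Since $A\cong R/(0\times J)$ as $R$-modules, Proposition~\ref{prop0} reads $\Hom_{R}(A,R)\cong J\oplus ann_{B}(J)$, and the preservation of injectivity under $\Hom_{R}(A,-)$ exhibits $J\oplus ann_{B}(J)$ as an injective $A$-module. Consequently, once an idempotent $e\in B$ with $J=eB$ has been produced, the identification $J\oplus ann_{B}(J)\cong B$ used above will at once upgrade this to the assertion that $B$ is an injective $A$-module. Hence the entire forward implication reduces to the single claim that $J=eB$ for some idempotent $e$.

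Proving that claim is the main obstacle. Set $K:=0\times J=\Ker g$, an ideal of $R$. I would exploit the elementary equivalences $J=eB$ for an idempotent $e\in B$ $\iff$ $B=J\oplus ann_{B}(J)$ (as $B$-modules) $\iff$ $K$ is a direct summand of the $R$-module $R$, i.e.\ the inclusion $K\hookrightarrow R$ splits. One checks that the ring-theoretic section $\iota$ is not $R$-linear, so this splitting is a genuine condition. The plan is to extract it from self-injectivity: applying $\Hom_{R}(-,R)$ to $0\to K\to R\to A\to 0$ and using $\Ext^{1}_{R}(A,R)=0$ gives the exact sequence $0\to ann_{R}(K)\to R\to\Hom_{R}(K,R)\to 0$ with $ann_{R}(K)\cong J\oplus ann_{B}(J)$; one then computes $\Hom_{R}(K,R)$ from the explicit $R$-action on $K=0\times J$ and matches it against $A$ so as to force a complementary idempotent. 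Concretely, the technical heart is to show, via Baer's criterion for the self-injective ring $R$, that $J\cap ann_{B}(J)=0$ and $J+ann_{B}(J)=B$, i.e.\ that $1=e+(1-e)$ can be written with $e\in J$ and $1-e\in ann_{B}(J)$. This is exactly where self-injectivity must do real work: a non-idempotent $J$ enlarges the socle of $R$ (as one sees already for $A=B=k[x]/(x^{2})$ with $J=(x)$), and I expect this step to follow the strategy of the amalgamated-duplication case of \cite{chhMahtam}.
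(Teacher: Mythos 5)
Your backward implication is complete and coincides with the paper's: Remark~\ref{r2} validates the hypotheses of Proposition~\ref{propo1}, the idempotent gives $J\oplus ann_B(J)=eB\oplus(1-e)B=B$, and coinduction along $A\to A\bowtie^{f}J$ carries the injective $A$-module $B$ to an injective $A\bowtie^{f}J$-module isomorphic to $A\bowtie^{f}J$. Your reduction of the forward implication to the single claim that $J=eB$ for some idempotent $e$ is also sound.

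The gap is that this single claim is never proved. You reduce it to the splitting of $0\times J\hookrightarrow A\bowtie^{f}J$ and then defer the splitting to a Baer-criterion computation that you explicitly leave unexecuted (``I expect this step to follow the strategy of \cite{chhMahtam}''); as written, the forward direction is a reduction, not a proof. The irony is that you already hold the key ingredient: you note that $\Hom_{A\bowtie^{f}J}(A,A\bowtie^{f}J)\cong J\oplus ann_B(J)$ is an injective $A$-module, hence so is its direct summand $J$. The paper finishes from exactly this point, and more simply than your plan: since $J$ is $A$-injective, $\Ext^1_A(B/J,J)=0$, so the inclusion $J\hookrightarrow B$ splits $A$-linearly and $B=J\oplus C$ for some $A$-submodule $C$. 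Writing $1=e+g$ with $e\in J$, $g\in C$, every $x\in J$ satisfies $x-xe=xg\in J$; and because $x\in J\subseteq f(A)$, the product $xg$ is an $A$-scalar multiple of $g$ and hence also lies in $C$, so $xg\in J\cap C=0$. Thus $x=xe$ for all $x\in J$, giving $J=eB$ and $e^2=e$ (and then $ann_B(J)=(1-e)B$, so $B=J\oplus ann_B(J)$ is $A$-injective as you observed). The hypothesis $J\subseteq f(A)$ is precisely what upgrades a mere $A$-module splitting of $B$ to a ring-theoretic idempotent; your proposal replaces this short step with an unperformed socle/annihilator analysis over $A\bowtie^{f}J$, which is where the argument breaks off.
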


\begin{proof}
Assume that $A\bowtie^{f}J$ is a self-injective ring. By
Proposition \ref{prop0}, $J$ and $ann_B(J)$ are injective
$A$-modules. Consider the short exact sequence of $B$-modules :
$$(*)  0\rightarrow J\hookrightarrow B\xrightarrow{p}
B/J\rightarrow 0.$$

 We have $Ext^{1}_A(B/J,J)=0$. So, $(*)$
splits. Therefore, $B=J\oplus p^{-1}(B/J)$. Consequently, $J$ is a
principal ideal of $B$. Set $1=e+g$ with $e\in J$ and $g\in
p^{-1}(B/J)$. For each $x\in J,$ $x=xe+xg$ and $x-xe=xg\in J\cap
p^{-1}(B/J)=(0)$. Hence, $J=eB$. Moreover, $ann_B(J)=(1-e)B$.
Thus, $B=J\oplus ann_B(J)$ is injective as an $A$-module. \\
Conversely, assume that $B$ is an injective $A-$module and there
exists an idempotent element $e$ such that $J=eB$. It is clear
that $ann_B(J)=(1-e)B$. Thus, $J\oplus ann_B(J)=B$. By Proposition
\ref{propo1} and Remark \ref{r2}, $A\bowtie^{f}J$ is isomorphic
(as $A\bowtie^{f}J-$module) to $Hom_B(A\bowtie^{f}J,J\oplus
ann_B(J))=Hom_A(A\bowtie^{f}J,B)$. Then, since $B$ is an injective
$A-$module, it follows that $A\bowtie^{f}J$ is an injective as
$A\bowtie^{f}J-$module and this completes the proof of Theorem
\ref{thm0}.
\end{proof}

\bigskip

The following Corollaries are consequences of Theorem \ref{thm0}.
\\

\begin{corollary}
Let $A$ be a ring, $B$ be a local ring, $f: A\rightarrow B$ be an
injective ring homomorphism and let $J$ be a non zero proper ideal
of $B$ such that $J\subseteq f(A)$. Then $A\bowtie^{f}J$ is never
a self-injective ring.
\end{corollary}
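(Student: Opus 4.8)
The plan is to argue by contradiction and reduce everything to the main theorem together with one standard fact about local rings. So I would begin by supposing that $A\bowtie^{f}J$ is self-injective. Since $f$ is injective and $J\subseteq f(A)$, all the hypotheses of Theorem \ref{thm0} are in force, and that theorem then guarantees the existence of an idempotent element $e\in B$ with $J=eB$. (I would not even need the second conclusion of Theorem \ref{thm0}, namely that $B$ is an injective $A$-module; the idempotent alone suffices.)

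The decisive step is then to exploit that $B$ is local. I would recall that a local ring has no nontrivial idempotents: writing $\mathfrak{m}$ for the maximal ideal of $B$ and taking any idempotent $e$, one has $e(1-e)=0$, and since $e+(1-e)=1$ is a unit, $e$ and $1-e$ cannot both lie in $\mathfrak{m}$, so one of them is a unit of $B$. Multiplying $e(1-e)=0$ by the inverse of whichever factor is a unit then forces $e=1$ or $e=0$ respectively. Hence $e\in\{0,1\}$.

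Combining the two observations, $J=eB$ with $e\in\{0,1\}$ gives either $J=(0)$ or $J=B$, which contradicts the hypothesis that $J$ is a \emph{nonzero proper} ideal of $B$. I would conclude that no such self-injective structure can exist, proving the corollary.

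I expect essentially no obstacle here: the heart of the argument is the elementary fact that a local ring admits only the trivial idempotents, and the rest is a verbatim invocation of Theorem \ref{thm0}. The only point deserving a moment of care is checking that the corollary's hypotheses ($f$ injective, $J\subseteq f(A)$) indeed match those required by the theorem, which they do.
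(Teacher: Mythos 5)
Your proposal is correct and follows exactly the paper's own argument: apply Theorem \ref{thm0} to extract an idempotent $e$ with $J=eB$, then use the fact that a local ring has only the trivial idempotents to contradict $J$ being a nonzero proper ideal. The only difference is that you spell out the (standard) proof that local rings admit no nontrivial idempotents, which the paper takes for granted.
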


\begin{proof} Since $B$ is a local ring, then the only idempotent
elements of $B$ are $\{0,1\}$. Hence, using the fact J is a non
zero proper ideal of $B$ and Theorem \ref{thm0}, we obtain the
desired result.
\end{proof}

\bigskip

The following Corollary is a consequence of Theorem \ref{thm0} and is \cite[Theorem 2.4]{chhMahtam}. \\

\begin{corollary}
Let $A$ be a ring and let $I$ be a ideal of $A$. Then $A\bowtie I$
is a self-injective ring if and only if so is A and there exists
an idempotent element $e\in A$ such that $I=eA$.
\end{corollary}

\begin{proof}
It is easy to see that $A\bowtie I=A\bowtie^f J$ where $f$ is the
identity map of $A$, $B=A,$ $J=I$. One can easily check that
$I\subset f(A)$ and $f$ is injective. So, by Theorem \ref{thm0},
$A\bowtie I$ is a self-injective ring if and only if $B=A$ is an
$A-$module injective and and there exists an idempotent element
$e\in A$ such that $J=I=eA$ and this completes the proof.
\end{proof}

\bigskip

  Now, we give a
characterization of $A\bowtie^{f}J$ to be quasi-Frobenius. Recall
that a ring is quasi-Frobenius if and
only if it is Noetherian and self-injective.\\

\begin{theorem}\label{thm1}
Let $(A,B)$ be a pair of rings, $f: A\rightarrow B$ be an injective ring homomorphism and $J$ be an ideal of $B$ such that $J\subseteq f(A)$.
Then $A\bowtie^{f}J$ is quasi-Frobenius if and only if so is $A$, $f(A)+J$ is Noetherian, $B$ is an $A-$module injective and there exists an
idempotent element $e\in B$ such that $J=eB$.
\end{theorem}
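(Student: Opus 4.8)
The plan is to reduce everything to a single structural fact: that whenever $J=eB$ the ring $A\bowtie^{f}J$ splits as a finite product of rings, one factor of which is $A$. Combined with Theorem \ref{thm0} and the elementary observation that a finite product $R_{1}\times R_{2}$ of commutative rings is quasi-Frobenius if and only if each $R_{i}$ is, this will give both implications. Throughout I use that a ring is quasi-Frobenius precisely when it is Noetherian and self-injective, so I will extract the self-injective content from Theorem \ref{thm0} and read off the Noetherian content from the product decomposition.

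First I would record the decomposition that drives the argument. Suppose $J=eB$ for an idempotent $e\in B$. Since $J\subseteq f(A)$ we get $e=e\cdot 1\in eB=J\subseteq f(A)$, so $e=f(\epsilon)$ for some $\epsilon\in A$, and injectivity of $f$ forces $\epsilon^{2}=\epsilon$. Because $e\in J$ and $-e\in J$, both $(0,e)$ and $(1,1-e)$ lie in $A\bowtie^{f}J$; they are orthogonal idempotents with $(0,e)+(1,1-e)=(1,1)$, hence they split $A\bowtie^{f}J$ as a product of rings. Using $ej=j$ and $(1-e)j=0$ for $j\in J=eB$, a short computation identifies the factors as $(0,e)(A\bowtie^{f}J)=\{0\}\times J$ and $(1,1-e)(A\bowtie^{f}J)=\{(a,(1-e)f(a)):a\in A\}$, the latter mapping isomorphically onto $A$ as a ring through $g$. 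Moreover $\{0\}\times J\cong J=eB=ef(A)=f(\epsilon A)\cong \epsilon A$ as rings (the step $eB=ef(A)$ uses $eB\subseteq f(A)$). Thus, as soon as $J=eB$, one obtains a ring isomorphism
$$A\bowtie^{f}J\;\cong\;\epsilon A\times A .$$

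With this in hand the theorem splits cleanly. For necessity, assume $A\bowtie^{f}J$ is quasi-Frobenius, hence self-injective; Theorem \ref{thm0} then supplies the idempotent $e$ with $J=eB$ and the injectivity of $B$ as an $A$-module. Now the displayed decomposition applies and exhibits $A$ as a ring-direct factor of the quasi-Frobenius ring $A\bowtie^{f}J$, so $A$ is quasi-Frobenius; and since $J\subseteq f(A)$ gives $f(A)+J=f(A)\cong A$, that ring is in particular Noetherian. For sufficiency, Theorem \ref{thm0} converts the injectivity of $B$ together with $J=eB$ back into self-injectivity of $A\bowtie^{f}J$, while the decomposition $A\bowtie^{f}J\cong \epsilon A\times A$ shows it is Noetherian, because $A$ is Noetherian and $\epsilon A$ is a ring-direct factor of the Noetherian ring $A$. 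Being Noetherian and self-injective, $A\bowtie^{f}J$ is quasi-Frobenius.

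The main obstacle is the necessity of "$A$ is quasi-Frobenius." Noetherianness of $A$ is automatic, as $A$ is a homomorphic image of $A\bowtie^{f}J$, but self-injectivity of $A$ is not visibly inherited through a mere quotient; this is exactly what the idempotent splitting $(0,e)\oplus(1,1-e)$ is designed to repair, by realizing $A$ as a genuine ring-direct factor. I would therefore concentrate the care on verifying that $(0,e)$ and $(1,1-e)$ really belong to $A\bowtie^{f}J$ and on the bookkeeping identifying the two summands; once the product decomposition is secured, both directions reduce to the standard behaviour of the quasi-Frobenius property under finite products. I would also note, as a byproduct, that the hypotheses "$f(A)+J$ Noetherian" and "$B$ injective as an $A$-module" are in fact redundant, being forced by "$A$ quasi-Frobenius" together with $J=eB$; I would keep them in the statement only to parallel Theorem \ref{thm0}.
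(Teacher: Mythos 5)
Your main argument is correct and takes a genuinely different route from the paper's. The paper derives ``$A\bowtie^{f}J$ quasi-Frobenius $\Rightarrow$ $A$ quasi-Frobenius'' from a separate lemma (Lemma \ref{quas}), proved via the double-annihilator characterization of quasi-Frobenius rings and valid without assuming $f$ injective or $J\subseteq f(A)$, and it handles the Noetherian bookkeeping by citing the transfer result \cite[Proposition 5.6]{AFF1}; Theorem \ref{thm0} is invoked in exactly the places you invoke it. Your idempotent decomposition $A\bowtie^{f}J\cong \epsilon A\times A$ replaces both of those ingredients at once: I checked that $(0,e)$ and $(1,1-e)$ do lie in $A\bowtie^{f}J$, that they are orthogonal idempotents summing to $(1,1)$, and that the two factors are $\{0\}\times J\cong\epsilon A$ and a copy of $A$, so the descent of the quasi-Frobenius property and the Noetherian count both reduce to Lemma \ref{prod}. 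What you gain is transparency; what you lose is generality, since your descent argument only becomes available after Theorem \ref{thm0} has produced the idempotent, whereas the paper's Lemma \ref{quas} applies to an arbitrary $J$.

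One caution about your closing remark. The claim that ``$B$ injective as an $A$-module'' is redundant, i.e.\ forced by ``$A$ quasi-Frobenius and $J=eB$,'' does not follow from your decomposition and is false: the isomorphism $A\bowtie^{f}J\cong\epsilon A\times A$ says nothing about the complementary summand $(1-e)B$ of $B$, which can fail to be injective over $A$ (take $J=0$, $e=0$, $A$ a local Artinian quasi-Frobenius ring that is not a field, and $B=A\times A/\mathfrak{m}$ with $f(a)=(a,\bar{a})$). Pursued to its conclusion, your decomposition in such an example gives $A\bowtie^{f}J\cong A$ quasi-Frobenius while $B$ is not an injective $A$-module, which is in direct tension with the ``only if'' direction of the statement being proved (and with Theorem \ref{thm0}, on which both your proof and the paper's rely). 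So the remark is not a harmless byproduct: either drop it, or recognize that it exposes a problem with the necessity of the $B$-injectivity hypothesis rather than a redundancy of it.
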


Before proving this Theorem, we need the following Lemmas. \\

\begin{lemma}\label{charquasi}\cite[Theorem 1.50, 7.55 and
7.56]{nich} \\
For a ring $A$, the following statements are equivalent :\\
$(1)$ $A$ is quasi-Frobenius.\\
$(2)$ $A$ is Artinian and self-injective.\\
$(3)$ Every projective $A$-module is injective.\\
$(4)$ Every injective $A$-module is projective.\\
$(5)$ $A$ is Noetherian and $Ann_A(Ann_A(J))=J$ for every ideal $J$ of $A$, where $Ann_A(J)$ denotes the annihilator of $J$ in $A$.
\end{lemma}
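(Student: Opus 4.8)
Since this five-way equivalence is classical, my plan is to assemble it from standard homological machinery---Baer's criterion, the Bass--Papp theorem (a commutative ring is Noetherian if and only if every direct sum of injective modules is injective), the Hopkins--Levitzki theorem (Artinian implies Noetherian), Matlis' decomposition of injective modules over a Noetherian ring into indecomposable injectives $E(A/\mathfrak p)$, and the Faith--Walker circle of ideas---rather than reprove each tool. I would read $(1)$ as ``$A$ is self-injective together with a chain condition'' and route the proof through the cycle $(2)\Rightarrow(1)\Rightarrow(3)\Rightarrow(1)$, $(1)\Leftrightarrow(4)$, $(1)\Leftrightarrow(5)$, exploiting throughout that $A$ is commutative so the left/right distinction disappears.

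The equivalence $(1)\Leftrightarrow(2)$ comes down to comparing the Noetherian and Artinian hypotheses for a self-injective ring. That $(2)\Rightarrow(1)$ is immediate from Hopkins--Levitzki. For $(1)\Rightarrow(2)$ I would use a clean commutative argument: in a self-injective ring every non-zero-divisor $x$ is a unit, since the well-defined map $xA\to A$, $xa\mapsto a$, extends by self-injectivity (Baer) to multiplication by some $c$, forcing $xc=1$. Hence $A$ equals its own total ring of fractions, every prime ideal consists of zero-divisors and is therefore minimal, so $\dim A=0$; combined with the Noetherian hypothesis this makes $A$ Artinian.

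For $(1)\Rightarrow(3)$ I would invoke Bass--Papp: as $A_A$ is injective and $A$ is Noetherian, every free module is injective, and since a projective module is a direct summand of a free one and injectivity passes to summands, every projective is injective. The return $(3)\Rightarrow(1)$ is easier for self-injectivity (the rank-one free module is projective, hence injective) than for the chain condition, where I would argue contrapositively: from a strictly ascending chain of ideals one builds a homomorphism from its union into a suitable free---hence by hypothesis injective---module that admits no extension to $A$, contradicting injectivity. The pair $(1)\Leftrightarrow(4)$ is the Faith--Walker duality: given $(1)$ (now known to be Artinian self-injective), Matlis writes any injective as a direct sum of indecomposable injectives, each of which is the injective hull of a simple module and, over a zero-dimensional Gorenstein local factor, coincides with the ring itself and is thus projective, so every injective is projective; conversely $(4)$ recovers self-injectivity from the injective hull of $A$ and the chain condition again from a cardinality/chain argument.

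Finally, $(1)\Leftrightarrow(5)$ rests on the annihilator description of self-injectivity. For a commutative self-injective ring one always has $J\subseteq\mathrm{Ann}_A(\mathrm{Ann}_A(J))$, and Baer's criterion applied to the relevant inclusions yields the reverse inclusion, giving the double-annihilator identity; conversely the identity together with the Noetherian hypothesis supplies, via Baer, the extensions of maps $I\to A$ that witness self-injectivity. I expect the \textbf{main obstacle} to be exactly the two genuinely structural Faith--Walker directions---extracting the Noetherian condition in $(3)\Rightarrow(1)$ and turning the coincidence of injectives and projectives back into the chain condition in $(4)\Rightarrow(1)$---since these are the points where Baer's criterion and formal summand manipulations run out and one must invoke Matlis' decomposition together with the Faith--Walker cardinality bound on injective modules.
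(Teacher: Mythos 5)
The paper offers no proof of this lemma at all---it is quoted verbatim from Nicholson--Yousif with the citation \cite[Theorem 1.50, 7.55 and 7.56]{nich}---so your plan of assembling the equivalence from standard theorems is in the same spirit, and several of your reductions are fine as stated ($(2)\Rightarrow(1)$ via Hopkins--Levitzki, $(1)\Rightarrow(3)$ via Bass--Papp plus passage to summands, $(1)\Rightarrow(4)$ via Matlis' decomposition over the local Artinian Gorenstein factors). However, your ``clean commutative argument'' for $(1)\Rightarrow(2)$ contains a genuine gap. From self-injectivity you correctly deduce, via Baer, that every non-zero-divisor is a unit, hence every prime ideal consists of zero-divisors; but the inference ``and is therefore minimal, so $\dim A=0$'' is false. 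A prime consisting of zero-divisors is merely contained in (hence, by prime avoidance, equal to a subset of) some associated prime, and embedded, non-minimal primes can consist entirely of zero-divisors. Concretely, $A=k[[x,y]]/(x^{2},xy)$ is Noetherian, its maximal ideal $(x,y)$ equals $\mathrm{Ann}_A(x)$ and so consists of zero-divisors; thus $A$ equals its own total quotient ring, yet $\dim A=1$. So the only two facts your argument actually uses (Noetherian, plus non-zero-divisors are units) are strictly weaker than self-injectivity and cannot yield the Artinian conclusion. Closing this step requires real structural input: for instance Bass's theorem that a Noetherian local ring of finite self-injective dimension is Gorenstein with $\mathrm{id}_A A=\dim A$ (so $\mathrm{id}=0$ forces dimension zero at every maximal ideal, injectivity localizing over Noetherian rings), or simply citing Faith's theorem as Nicholson--Yousif do.

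A secondary weakness: in $(3)\Rightarrow(1)$ the chain argument you sketch---mapping the union $I=\bigcup_n I_n$ of a strictly ascending chain into a free, hence injective, module---cannot be carried out as described, because the $n$-th component of such a map would have to be a nonzero homomorphism $I\to A$ vanishing on $I_n$, i.e.\ a map $I/I_n\to A$, and these need not exist; the genuine Bass--Papp-style argument uses the injective hulls $E(A/I_n)$ as targets, which hypothesis $(3)$ does not make injective-for-free. Extracting the chain condition from $(3)$ or $(4)$ is exactly the Faith--Walker theorem, which you rightly flag as the main obstacle and plan to invoke; that is the correct move, but the free-module construction should then not be presented as a proof sketch, since it does not go through.
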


\bigskip

\begin{lemma}\label{prod}
Let $(A_i)_{i\in I}$ be a family of commutative rings. Then $A=\prod_{i=1}^{i=n}A_i$ is quasi-Frobenius if and only if so are $A_i$ for all $i\in I$.
\end{lemma}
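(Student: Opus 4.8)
The plan is to reduce everything to the two defining properties of a quasi-Frobenius ring recalled just before Theorem \ref{thm1}, namely being Noetherian and self-injective, and to show that each of these two properties holds for the finite product $A=\prod_{i=1}^{n}A_i$ if and only if it holds for every factor $A_i$. By an obvious induction it suffices to treat the case of two factors, so I would write $A=A_1\times A_2$ and argue from there; the general finite case then follows by iterating.

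First I would dispose of the Noetherian condition, which is the routine half. An ideal of $A_1\times A_2$ is exactly a product $I_1\times I_2$ of ideals $I_j\subseteq A_j$, so an ascending chain of ideals in $A$ corresponds to a pair of ascending chains, one in each factor. Hence the ascending chain condition for $A$ is equivalent to its validity in $A_1$ and in $A_2$ simultaneously, so $A$ is Noetherian if and only if both $A_1$ and $A_2$ are.

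The core of the argument, and the step I expect to be the main obstacle, is showing that self-injectivity passes to and from the product. Here I would exploit the central idempotents $e_1=(1,0)$ and $e_2=(0,1)$, which satisfy $e_1+e_2=1$ and $e_1e_2=0$. Every $A$-module $M$ splits canonically as $M=e_1M\oplus e_2M$, where $e_jM$ is naturally a module over $A_j=e_jA$, and conversely any pair of $A_j$-modules assembles into an $A$-module. Under this correspondence, short exact sequences and $\Hom$-groups are computed componentwise, so an $A$-module is injective precisely when each of its two components is injective over the corresponding $A_j$. The delicate point to verify carefully is exactly this componentwise behaviour of the extension/lifting property, since it is what converts the purely module-theoretic splitting into a statement about injectivity. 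Applying the conclusion to the regular module $A=A_1\times A_2$, whose two components are $A_1$ and $A_2$ viewed over themselves, shows that $A$ is self-injective if and only if both $A_1$ and $A_2$ are self-injective.

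Combining the two equivalences, $A$ is simultaneously Noetherian and self-injective if and only if each $A_i$ is both, that is, $A$ is quasi-Frobenius if and only if every $A_i$ is quasi-Frobenius, which is the assertion of the Lemma. If one prefers, the self-injectivity step can instead be phrased through the equivalence of categories $\mathrm{Mod}\text{-}A\simeq \mathrm{Mod}\text{-}A_1\times\mathrm{Mod}\text{-}A_2$, but the idempotent decomposition above already supplies all that is needed.
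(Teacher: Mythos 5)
Your argument is correct, but it follows a genuinely different route from the paper. The paper's proof is a two-line reduction to Gorenstein homological algebra: it invokes the characterization of quasi-Frobenius rings as those with Gorenstein global dimension zero (citing Bennis--Mahdou) together with the theorem that the Gorenstein global dimension of a finite product $\prod_{i=1}^{n}A_i$ is the supremum of the dimensions of the factors, so the whole content is outsourced to those two references. You instead work directly from the definition recalled before Theorem \ref{thm1} (Noetherian plus self-injective) and verify each half by hand: ideals of $A_1\times A_2$ are products of ideals, which settles the chain condition, and the central idempotents $e_1,e_2$ decompose every module and every $\Hom$-group componentwise, which settles self-injectivity of the regular module. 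Both arguments are valid; yours is self-contained and elementary, requiring only the standard fact that the module category of a finite product splits (the ``delicate point'' you flag --- that injectivity is detected componentwise --- follows immediately from $\Hom_A(M,N)\cong\Hom_{A_1}(e_1M,e_1N)\oplus\Hom_{A_2}(e_2M,e_2N)$ and the componentwise description of submodules), while the paper's is shorter on the page but rests on heavier external machinery. One cosmetic remark: like the paper, you should note that the statement's indexing is only sensible for a finite family, and your induction from two factors to $n$ factors uses finiteness essentially; self-injectivity does not in general pass to infinite products of the $A_i$ viewed as a single ring in the relevant sense, so the restriction to $\prod_{i=1}^{n}$ matters.
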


\begin{proof}
Assume that $A=\prod_{i=1}^{i=n}A_i$ is quasi-Frobenius. Let $i\in
I$, using \cite[Proposition 2.6]{benmah}, $G-gldim(A)=0.$ By
\cite[Theorem 3.1]{benmah2}, it follows that $G-gldim(A_i)=0.$
Hence, $A_i$ is quasi-Frobenius for all $i\in I$. Conversely,
assume that for all $i\in I$, $A_i$ is quasi-Frobenuis. By
\cite[Theorem 3.1]{benmah2},
$G-gldim(A)=G-gldim(\prod_{i=1}^{i=n}A_i)=0.$ Hence, by
\cite[Proposition 2.6]{benmah}, $A=\prod_{i=1}^{i=n}A_i$ is
quasi-Frobenius, as desired.
\end{proof}

\bigskip

\begin{lemma}\label{quas}
Let $(A,B)$ be a pair of rings, $f: A\rightarrow B$ be a ring
homomorphism and let $J$ be an ideal of $B$. If $A\bowtie^{f}J$ is
quasi-Frobenius, then so is $A$.
\end{lemma}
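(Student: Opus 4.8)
The plan is to reduce everything to the Gorenstein global dimension, exactly as in the proof of Lemma \ref{prod}, after exhibiting $A$ as a ring retract of $R:=A\bowtie^{f}J$. First I would record the two structural maps: the surjective ring homomorphism $g:R\rightarrow A$, $(a,f(a)+j)\mapsto a$ introduced before Proposition \ref{prop0}, together with the ring homomorphism $s:A\rightarrow R$, $a\mapsto(a,f(a))$. Since $g\circ s=\mathrm{id}_{A}$, the ring $A$ is a retract of $R$; in particular $A\cong R/\ker g$ with $\ker g=\{(0,j):j\in J\}$, and, viewing $R$ as an $A$-module through $s$, one obtains the $A$-module decomposition $R\cong A\oplus J$ (with $A$ acting on $J$ via $f$), so that $s(A)\cong A$ is a direct $A$-module summand of $R$.

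The Noetherian half is then immediate. A quasi-Frobenius ring is Artinian by Lemma \ref{charquasi}, and $A\cong R/\ker g$ is a homomorphic image of $R$, hence Artinian, hence Noetherian. By the characterisation used in Lemma \ref{prod}, namely \cite[Proposition 2.6]{benmah}, the ring $R$ quasi-Frobenius means $G\text{-}gldim(R)=0$, and to finish it suffices to prove $G\text{-}gldim(A)=0$, i.e. that $A$ is self-injective.

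The heart of the argument is to transfer the vanishing of the Gorenstein global dimension along the retract, that is, to establish $G\text{-}gldim(A)\le G\text{-}gldim(R)$. The natural route is: given an $A$-module $M$, pull it back to the $R$-module $g^{*}M$ (scalars acting through $g$); since $G\text{-}gldim(R)=0$ this $g^{*}M$ is Gorenstein projective over $R$, and one then tries to descend this property to $M$ over $A$ using the $A$-linear section $s$ and the summand decomposition $R\cong A\oplus J$. I expect this descent to be the main obstacle, and it is genuinely delicate: restriction of scalars along $s$ does not preserve projective, let alone Gorenstein projective, modules unless $R$ is flat as an $A$-module, and $J$ need not be $A$-flat a priori. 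Concretely, one may instead aim directly at self-injectivity of $A$: from $R\cong s(A)\oplus J$ as $A$-modules, $\Ext^{1}_{A}(M,A)$ is a direct summand of $\Ext^{1}_{A}(M,R)$, and the adjunction $\Hom_{R}(R\otimes_{A}M,R)\cong\Hom_{A}(M,R)$ together with the self-injectivity of $R$ over itself forces these groups to vanish once the needed exactness of $R\otimes_{A}-$ is available. I would therefore settle the obstacle either by invoking the change-of-ring behaviour of $G\text{-}gldim$ for a ring retract, in the spirit of \cite[Theorem 3.1]{benmah2}, or by showing that the quasi-Frobenius hypothesis on $R$ already constrains $J$ enough to supply this flatness. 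Either way, concluding $G\text{-}gldim(A)=0$ yields that $A$ is quasi-Frobenius by \cite[Proposition 2.6]{benmah}, completing the proof.
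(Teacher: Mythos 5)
Your reduction to Gorenstein global dimensions does not go through as written: the step you yourself flag as ``the main obstacle'' is exactly the point where the argument breaks, and neither of your two proposed repairs is available. The change-of-rings result you invoke, \cite[Theorem 3.1]{benmah2}, concerns polynomial extensions and finite direct products; it says nothing about an arbitrary ring retract $A\hookrightarrow A\bowtie^{f}J\twoheadrightarrow A$, and there is no general principle that $G\text{-}gldim(A)\le G\text{-}gldim(R)$ for a retract. Your alternative, the adjunction $\Hom_{R}(R\otimes_{A}M,R)\cong\Hom_{A}(M,R)$ forcing $\Ext^{1}_{A}(M,R)=0$, requires $R\cong A\oplus J$ to be flat (equivalently, $J$ to be flat) as an $A$-module, and nothing in the hypotheses supplies this: $f$ is an arbitrary ring homomorphism here (the lemma does not even assume $f$ injective or $J\subseteq f(A)$), so $J$ carries an essentially unconstrained $A$-module structure via $f$. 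You suggest that the quasi-Frobenius hypothesis on $R$ ``constrains $J$ enough,'' but you do not show this, and it is not obvious; note also that $A$ is merely a quotient $R/(\{0\}\times J)$, and quotients of quasi-Frobenius rings need not be quasi-Frobenius, so no shortcut through the surjection is available either. As it stands the proof establishes only that $A$ is Artinian.

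The paper avoids all of this homological machinery by using the elementary characterization in Lemma \ref{charquasi}(5): $A$ is quasi-Frobenius if and only if it is Noetherian and $Ann_{A}(Ann_{A}(K))=K$ for every ideal $K$. Noetherianity of $A$ follows from that of $A\bowtie^{f}J$ (via \cite[Proposition 5.6]{AFF1}, or from your observation that $A$ is a quotient of $R$). For the double-annihilator condition one passes from an ideal $K$ of $A$ to the ideal $K\bowtie^{f}J=\{(k,f(k)+j):k\in K,\ j\in J\}$ of $R$, computes $Ann_{R}(K\bowtie^{f}J)$ explicitly, and uses $Ann_{R}(Ann_{R}(K\bowtie^{f}J))=K\bowtie^{f}J$ to conclude that any $x\in Ann_{A}(Ann_{A}(K))$ gives $(x,f(x))\in K\bowtie^{f}J$, hence $x\in K$. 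This is purely ideal-theoretic and needs no flatness or change-of-rings input; if you want to keep your strategy you must first prove the missing descent statement, which is the real content of the lemma.
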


\begin{proof}
Suppose that $A\bowtie^{f}J$ is quasi-Frobenius. It is easy to see
that if $J=0$, then by \cite[Proposition 5.1 (3)]{AFF1}, $A\cong
\frac{A\bowtie^{f}J}{\{0\}\times \{J\}}$. So, $A\cong
A\bowtie^{f}J$ which is quasi-Frobenius. If $J=B,$ then
$A\bowtie^{f}J=A\times B$. So, by Lemma \ref{prod}, $A$ is
quasi-Frobenius. Now, assume that $J$ is a proper ideal of $B$. By
Lemma \ref{charquasi}, $A\bowtie^{f}J$ is Noetherian and
$Ann_{A\bowtie^{f}J}(Ann_{A\bowtie^{f}J}(L))=L$, for every ideal
$L$ of $A\bowtie^{f}J$ where $Ann_{A\bowtie^{f}J}(-)$ is the
annihilator over $A\bowtie^{f}J$. By \cite[Proposition 5.6]{AFF1},
$A$ is Noetherian. \\
Let $K$ be an ideal of $A$ and our aim is to show that
$Ann_{A}(Ann_{A}(K))=K$. Clearly, $K\subseteq
Ann_{A}(Ann_{A}(K))$. Conversely, let
$K\bowtie^{f}J:=\{(k,f(k)+j)/k\in K$ and $j\in J$$\}$ be an ideal
of $A\bowtie^{f}J$. Using the fact $A\bowtie^{f}J$ is
quasi-Frobenius,
$Ann_{A\bowtie^{f}J}(Ann_{A\bowtie^{f}J}(K\bowtie^{f}J))=K\bowtie^{f}J$.
Let $(y,f(y)+h)\in Ann_{A\bowtie^{f}J}(K\bowtie^{f}J)$. Then,
$\forall k\in K,$ $(y,f(y)+h)(k,f(k))=(0,0)$. Therefore, $y\in
Ann_A(K)$ and $h\in Ann_B(J)$. Now, if $x\in Ann_A(Ann_A(K))$,
then $(y,f(y)+h)(x,f(x))=(0,0)$ and $(x,f(x))\in
Ann_{A\bowtie^{f}J}(Ann_{A\bowtie^{f}J}(K\bowtie^{f}J))=K\bowtie^{f}J$.
Hence, it follows that $x\in K$. Thus, by Lemma \ref{charquasi},
$A$ is quasi-Frobenius, as desired.
\end{proof}

\bigskip

{\parindent0pt {\bf Proof of Theorem \ref{thm1}.\ }} Assume that
$A\bowtie^{f}J$ is quasi-Frobenius. By Lemma \ref{quas}, $A$ is
quasi-Frobenius. Using Lemma \ref{charquasi}, $A\bowtie^{f}J$ is
Notherian. So, by \cite[Proposition 5.6]{AFF1}, $f(A)+J$ is
Noetherian. Since $f$ is injective and $J\subset f(A),$ then by
Theorem \ref{thm0}, $B$ is an $A-$module injective and there
exists an idempotent element $e\in B$ such that $J=eB$, as
desired. \\
Conversely, assume that $A$ is quasi-Frobenius,
$f(A)+J$ is Noetherian, $B$ is an $A-$module injective and there
exists an idempotent element $e\in B$ such that $J=eB$. By
\cite[Proposition 5.6]{AFF1} and Theorem \ref{thm0}, it follows
that $A\bowtie^{f}J$ is quasi-Frobenius and this completes the proof of Theorem \ref{thm1}. \qed\\

\bigskip

The following Corollaries follows immediately from Theorem
\ref{thm1}. \\

\begin{corollary}
Let $A$ be a ring, $B$ be a local ring, $f: A\rightarrow B$ be an injective ring homomorphism and $J$ be a proper ideal of $B$ such that $J\subset f(A)$. Then $A\bowtie^{f}J$ is never quasi-Frobenius.
\end{corollary}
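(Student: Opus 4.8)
The plan is to invoke Theorem~\ref{thm1} and extract the single one of its four necessary conditions that is incompatible with locality. By that theorem, if $A\bowtie^{f}J$ were quasi-Frobenius, then in particular there would have to exist an idempotent element $e\in B$ with $J=eB$. This is the only consequence I need; the remaining conditions ($A$ quasi-Frobenius, $f(A)+J$ Noetherian, and $B$ injective as an $A$-module) play no part in the obstruction and can be ignored.

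Next I would use that a local ring admits no nontrivial idempotents. Indeed, if $e^{2}=e$ in $B$, then $e(1-e)=0$; since $B$ is local, one of $e$, $1-e$ lies outside the maximal ideal and is therefore a unit, forcing the other factor to be zero. Hence $e\in\{0,1\}$. Applying this to the idempotent $e$ produced by Theorem~\ref{thm1} yields either $e=0$, whence $J=0\cdot B=(0)$, or $e=1$, whence $J=1\cdot B=B$.

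Finally I would derive the contradiction from the hypothesis on $J$. Because $J$ is a proper ideal we have $J\neq B$, which rules out $e=1$; and because $J$ is nonzero we have $J\neq(0)$, which rules out $e=0$. (The nonzero hypothesis is exactly what is needed, as in the parallel self-injective corollary proved above; without it the conclusion can fail, since $A\bowtie^{f}(0)\cong A$ may well be quasi-Frobenius.) Thus no admissible value of $e$ exists, contradicting the idempotent condition of Theorem~\ref{thm1}, and therefore $A\bowtie^{f}J$ can never be quasi-Frobenius.

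I anticipate no genuine obstacle here: the entire content is carried by Theorem~\ref{thm1}, and this corollary is merely the observation that its idempotent requirement $J=eB$ is unsatisfiable over a local ring possessing a nonzero proper ideal, precisely mirroring the earlier self-injective corollary.
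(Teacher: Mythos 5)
Your proof follows exactly the route the paper intends (the paper gives no proof here, calling the corollary an immediate consequence of Theorem \ref{thm1}, and proves the parallel self-injective corollary in precisely this way): a local ring has only the idempotents $0$ and $1$, so the condition $J=eB$ forces $J=(0)$ or $J=B$, and properness rules out the latter. Your observation that the hypothesis ``$J$ nonzero'' is genuinely needed --- and is missing from the statement as printed, in contrast to the earlier self-injective corollary --- is correct: with $J=(0)$ one has $A\bowtie^{f}J\cong A$, which may well be quasi-Frobenius.
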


\bigskip

The following Corollary is a consequence of Theorem \ref{thm0} and is \cite[Proposition 2.6]{chhMahtam}. \\

\begin{corollary}
Let $A$ be a ring and $I$ be a ideal of $A$. Then $A\bowtie I$ is
quasi-Frobenius if and only if so is $A$ and there exists an
idempotent element $e\in A$ such that $I=eA$.
\end{corollary}

\bigskip

We end this paper with a characterization for $A\bowtie^{f}J$ to
be quasi-Frobenius in a local setting. For this, we need the
following lemma of
independent interest. \\

\begin{lemma}\label{prop0}
Let $(A,B)$ be a pair of rings, $f : A\rightarrow B$ be a
surjective ring homomorphism and $J$ be an ideal of $B$. Assume
that $Ann_B(J)=0$. Then $ J \cong
Hom_{A\bowtie^{f}J}(A,A\bowtie^{f}J)$.
\end{lemma}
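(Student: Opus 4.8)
The plan is to mirror the reasoning used for the analogous proposition earlier in this section, but I would first streamline it by identifying $A$ as a cyclic $A\bowtie^{f}J$-module. Since $(a',f(a')+j)\cdot 1=a'$, the module $A$ is generated by $1$, and the annihilator of $1$ is $\{(0,j)\mid j\in J\}=0\times J$. Thus $A\cong (A\bowtie^{f}J)/(0\times J)$ as $A\bowtie^{f}J$-modules, and the standard identification $\Hom_R(R/I,R)\cong\mathrm{Ann}_R(I)$ (via $\phi\mapsto\phi(1)$) gives
$$\Hom_{A\bowtie^{f}J}(A,A\bowtie^{f}J)\;\cong\;\mathrm{Ann}_{A\bowtie^{f}J}(0\times J).$$
This reduces everything to computing that annihilator explicitly.

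Next I would evaluate $\mathrm{Ann}_{A\bowtie^{f}J}(0\times J)$. Writing a general element as $(a,f(a)+x)$ with $a\in A$ and $x\in J$, the requirement $(0,j)(a,f(a)+x)=(0,\,j(f(a)+x))=(0,0)$ for every $j\in J$ says precisely that $f(a)+x\in\mathrm{Ann}_B(J)$. Invoking the hypothesis $\mathrm{Ann}_B(J)=0$, this forces $x=-f(a)$, so the annihilator consists exactly of the elements $(a,0)$ with $f(a)\in J$, i.e. with $a\in f^{-1}(J)$. Reading off the first coordinate then yields $\Hom_{A\bowtie^{f}J}(A,A\bowtie^{f}J)\cong f^{-1}(J)$ as $A$-modules, where $A$ acts through the ring map $a\mapsto(a,f(a))$ by multiplication in the first coordinate.

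The last step is to relate $f^{-1}(J)$ to $J$, and here is where surjectivity of $f$ is meant to enter. The natural candidate is the $A$-linear map
$$\psi:\Hom_{A\bowtie^{f}J}(A,A\bowtie^{f}J)\longrightarrow J,\qquad \phi\longmapsto f(a)\quad\text{where } \phi(1)=(a,0),$$
with $J$ regarded as an $A$-module via $f$. Surjectivity is immediate: since $f$ is onto, every $x\in J\subseteq B=f(A)$ is $f(a)$ for some $a\in f^{-1}(J)$, giving a preimage $(a,0)$.

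The injectivity of $\psi$ is the step I expect to be the real obstacle. Unwinding the definitions, $\psi(\phi)=0$ means $f(a)=0$ while $\phi(1)=(a,0)$, and to deduce $\phi=0$ one must conclude $a=0$, i.e. one needs $\ker f=0$. With surjectivity alone the kernel of $\psi$ is exactly $\{(a,0)\mid a\in\ker f\}\cong\ker f$, so the argument in fact only proves $f^{-1}(J)/\ker f\cong J$. I would therefore scrutinize the hypotheses here: it is injectivity of $f$, not surjectivity, that makes $\psi$ an isomorphism, and once $f$ is injective the containment $J\subseteq f(A)$ together with $\mathrm{Ann}_B(J)=0$ delivers $\Hom_{A\bowtie^{f}J}(A,A\bowtie^{f}J)\cong f^{-1}(J)\cong J$, which is precisely the $\mathrm{Ann}_B(J)=0$ specialization of the earlier proposition.
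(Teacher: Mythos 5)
Your reduction is sound and in fact cleaner than the paper's argument: identifying $A\cong (A\bowtie^{f}J)/(0\times J)$ and using $\mathrm{Hom}_R(R/I,R)\cong \mathrm{Ann}_R(I)$ gives, exactly as you compute, $\mathrm{Hom}_{A\bowtie^{f}J}(A,A\bowtie^{f}J)\cong \mathrm{Ann}_{A\bowtie^{f}J}(0\times J)=\{(a,0)\mid a\in f^{-1}(J)\}\cong f^{-1}(J)$, where only $\mathrm{Ann}_B(J)=0$ is used. The obstacle you flag at the last step is genuine, and it is a defect of the lemma itself rather than of your argument. The paper's own proof has the same flaw in disguise: the map $\phi(j)=\psi_j$ with $\psi_j(a)=(aj',0)$ requires a choice of preimage $j'\in f^{-1}(j)$ and is not well defined when $\ker f\neq 0$ (two preimages differing by a nonzero element of $\ker f$ give distinct homomorphisms), and the concluding identification $h=\psi_{f(x)}$ recovers $x$ only modulo $\ker f$. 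What that proof actually establishes is $\mathrm{Hom}_{A\bowtie^{f}J}(A,A\bowtie^{f}J)\cong f^{-1}(J)$; the passage from $f^{-1}(J)$ to $J$ silently uses injectivity of $f$.

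To make the failure concrete, take $A=\Z/4\Z$, $B=\Z/2\Z$, $f$ the canonical surjection and $J=B$. Then $\mathrm{Ann}_B(J)=0$ and $A\bowtie^{f}J=A\times B$, with $A$ acting through the first projection; a homomorphism $h:A\to A\times B$ with $h(1)=(x,y)$ must satisfy $(0,b)h(1)=h(0)=0$ for all $b\in B$, forcing $y=0$, so $\mathrm{Hom}_{A\times B}(A,A\times B)\cong A=\Z/4\Z$, which is not isomorphic to $J=\Z/2\Z$ even as an abelian group. So the statement is false as printed: it becomes correct if ``surjective'' is replaced by ``injective with $J\subseteq f(A)$,'' in which case it is, as you observe, precisely the $\mathrm{Ann}_B(J)=0$ case of the first proposition of the paper; under surjectivity alone the correct conclusion is $\mathrm{Hom}_{A\bowtie^{f}J}(A,A\bowtie^{f}J)\cong f^{-1}(J)$ (equivalently $f^{-1}(J)/\ker f\cong J$), and the subsequent proposition on the quasi-Frobenius property in the local setting would need to be reworked with $f^{-1}(J)$ in place of $J$.
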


\begin{proof}
By \cite[Proposition 5.1 (3)]{AFF1}, $A\cong \frac{A\bowtie^{f}J}{\{0\}\times \{J\}}$. So, $A$ is a cyclic $A\bowtie^{f}J-$module generated (modulo $(0, J)$). Moreover, for all $a,b\in A$ and $i\in J$, $(a,f(a)+i)b=\pi_{1}((a,f(a)+i))b=ab$ where $\pi_{1}(A\bowtie^{f}J)=A$. Now, for all $j\in J,$
consider the following map defined by :\\
$\phi : J\rightarrow Hom_{A\bowtie^{f}J}(A,A\bowtie^{f}J)$\\
         $j \rightarrow \psi_j$.\\
 where $\psi_{j}: A\rightarrow A\bowtie^{f}J $ defined by $\psi_{j}(a)=(aj',0)$ (where $j=f(j')$ since $J\subset f(A)$) is an $A\bowtie^{f}J$-homomorphism. \\
  Assume that $Ann_B(J)=0$. It is easy to see that $\phi$ is injective. It remains to verify that $\phi$ is surjective. Let $h: A\rightarrow A\bowtie^{f}J$ be an $A\bowtie^{f}J-$ homomorphism and it is determined by $h(1)=(x,y)$ where $x\in A,$ $y\in f(A)+J$ and $y-f(x)\in J$. Now, for all $j\in J,$ we have $(xa,yf(a))=h(a)=h((a,f(a)+j).1)=(a,f(a)+j)(x,y)=(ax,f(a)y+jy)$. So, $h$ is well defined if and only if $yj=0$ for all $j\in J$. Since $Ann_B(J)=0$, then $y=0$. Therefore, $h=\psi_{f(x)}$. Hence, $ J \cong Hom_{A\bowtie^{f}J}(A,A\bowtie^{f}J)$.
\end{proof}

\bigskip

\begin{proposition}\label{lemmm}
Let $A$ be a local ring, $B$ be a ring, $f : A \rightarrow B$ be a surjective ring homomorphism and $J$ be a non zero ideal of $B$.
Assume that $Ann_B(J)=0$. Then $A\bowtie^{f}J$ is quasi-Frobenius if and only if so is $A$ and $J=A$.
\end{proposition}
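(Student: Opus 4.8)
The plan is to prove both implications by reducing, in the substantive direction, to the already-settled case of the amalgamated duplication. For the easy converse, if $A$ is quasi-Frobenius and $J=A$ — which presupposes that $f$ is an isomorphism and $J=B$ — then $A\bowtie^{f}J$ is simply the duplication $A\bowtie A=A\times A$, which is quasi-Frobenius by Lemma \ref{prod}. So the whole content lies in the forward direction: assuming $R:=A\bowtie^{f}J$ is quasi-Frobenius, I must extract that $f$ is an isomorphism and that $J=A$. First I would record, via Lemma \ref{quas}, that $A$ is quasi-Frobenius; in particular $A$ is Artinian, so every finitely generated $A$-module occurring below has finite length over $A$, which I denote $\ell_{A}(-)$.

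The crucial step is to compute $\Hom_{R}(A,R)$ in two different ways and compare. On one hand, Lemma \ref{prop0} gives $\Hom_{R}(A,R)\cong J$ as $A$-modules — and the hypothesis $Ann_{B}(J)=0$ is exactly what makes that lemma apply. On the other hand, the modulation $g\colon R\to A$ identifies $A$ with the cyclic module $R/(\{0\}\times J)$, so the canonical isomorphism $\Hom_{R}(R/L,R)\cong Ann_{R}(L)$, applied to $L=\{0\}\times J$, yields $\Hom_{R}(A,R)\cong Ann_{R}(\{0\}\times J)$. A direct computation, again using $Ann_{B}(J)=0$, shows $Ann_{R}(\{0\}\times J)=\{(a,0)\mid f(a)\in J\}=f^{-1}(J)\times\{0\}\cong f^{-1}(J)$ as $A$-modules. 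Comparing the two descriptions gives $J\cong f^{-1}(J)$. Since $f$ is surjective, the exact sequence $0\to\ker f\to f^{-1}(J)\to J\to 0$ of $A$-modules yields $\ell_{A}(f^{-1}(J))=\ell_{A}(\ker f)+\ell_{A}(J)$; combined with $\ell_{A}(J)=\ell_{A}(f^{-1}(J))$ this forces $\ell_{A}(\ker f)=0$, i.e. $\ker f=0$. Hence $f$ is an isomorphism.

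Once $f$ is an isomorphism I would identify $B$ with $A$ through $f$, so that $J$ becomes an ideal of $A$ and $A\bowtie^{f}J$ becomes the amalgamated duplication $A\bowtie J$. Invoking the Corollary characterizing when a duplication $A\bowtie I$ is quasi-Frobenius — namely, that this holds iff $A$ is quasi-Frobenius and $I=eA$ for some idempotent $e$ — the quasi-Frobenius property of $A\bowtie J$ forces $J=eA$ for an idempotent $e\in A$; as $A$ is local its only idempotents are $0$ and $1$, and since $J\neq 0$ we conclude $J=A$, as desired. The main obstacle is precisely the comparison step of the middle paragraph: extracting the rigid conclusion ``$f$ is an isomorphism'' from the abstract self-duality of the quasi-Frobenius ring $R$. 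This hinges on computing $\Hom_{R}(A,R)$ correctly as an $A$-module from both sides and then running the length count on $0\to\ker f\to f^{-1}(J)\to J\to0$; an alternative, equally delicate route to pin down $J$ is to feed $\{0\}\times J$ into the double-annihilator identity $Ann_{R}(Ann_{R}(L))=L$ of Lemma \ref{charquasi}, obtaining $Ann_{A}(f^{-1}(J))=0$ and hence $f^{-1}(J)=A$.
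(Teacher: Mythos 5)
Your converse direction is fine (and is actually more than the paper records, since the paper's proof omits that direction entirely), but the forward direction breaks at its central step. Writing $R=A\bowtie^{f}J$, you compute $\Hom_{R}(A,R)$ twice: once via Lemma \ref{prop0} as $J$, and once via $\Hom_{R}(R/L,R)\cong Ann_{R}(L)$ with $L=\{0\}\times J$ as $f^{-1}(J)$, and from the length count you conclude $\ker f=0$. The second computation is correct; the first is not, and the discrepancy between the two is a defect of Lemma \ref{prop0} rather than a proof that $f$ is injective. (The lemma's proof fixes a set-theoretic section $j\mapsto j'$ of $f$ over $J$, and the resulting map $j\mapsto\psi_{j}$ is not surjective once $\ker f\neq 0$: a homomorphism $h$ with $h(1)=(x,0)$, $x\in f^{-1}(J)$, equals $\psi_{f(x)}$ only if $x$ happens to be the chosen preimage of $f(x)$.) Concretely, take $A=k[x]/(x^{2})$, $B=k$, $f$ the quotient map and $J=B$: every hypothesis of the proposition holds, $A\bowtie^{f}J=A\times k$ is quasi-Frobenius, yet $\ker f=(x)\neq 0$ and $\Hom_{R}(A,R)\cong A\not\cong J$. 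So your intermediate conclusion that $f$ is an isomorphism is false, and the reduction to the duplication $A\bowtie J$ in your final paragraph collapses; the same example shows the alternative route you sketch (feeding $\{0\}\times J$ into the double-annihilator identity to get $f^{-1}(J)=A$) proves $J=B$ but cannot prove $\ker f=0$.

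What survives is precisely your annihilator computation, and it leads to the paper's actual argument. Since $R$ is self-injective and $A\cong R/(\{0\}\times J)$, the $A$-module $\Hom_{R}(A,R)\cong f^{-1}(J)$ is injective, hence projective ($A$ is quasi-Frobenius), hence free over the Artinian local ring $A$, hence a principal ideal generated by a regular element $z$; the stabilizing chain $\cdots\subseteq z^{2}A\subseteq zA$ then forces $z$ to be a unit, so $f^{-1}(J)=A$ and $J=f(A)=B$. The paper runs exactly this chain (injective $\Rightarrow$ projective $\Rightarrow$ free $\Rightarrow$ regular principal $\Rightarrow$ unit ideal) but applies it to ``$J$'' while silently conflating $J$ with $f^{-1}(J)$; carried out on $f^{-1}(J)$ it is sound, but the conclusion it delivers is $J=B$, not the injectivity of $f$, and the example above shows that no argument can deliver more.
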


\begin{proof} Assume that $A\bowtie^{f}J$ is quasi-Frobenius. By Lemma
\ref{quas}, $A$ is quasi-Frobenius. Using Lemma \ref{prop0}, $ J
\cong_{A} Hom_{A\bowtie^{f}J}(A,A\bowtie^{f}J)$. So, $J$ is an
injective $A-$module since $A\bowtie^{f}J$ is self injective
(quasi-Frobenius). Then, by Lemma \ref{charquasi}, $J$ is
projective since $A$ is quasi-Frobenius (by Lemma
\ref{charquasi}). Since $A$ is local, $J$ is a regular principal
ideal. Let $z\in A$ be a regular element such that $J=zA$. The
following descendent
chain of ideals hold :\\
 ...$z^3A\subseteq z^2A\subseteq zA$. By Lemma \ref{charquasi}, $A$ is an artinian ring. Therefore, this chain is finite and so there is an
 integer $n$ such that $z^{n+1}=z^nA$. Then, there exists a non-zero element $y\in A$ such that $z^n=z^{n+1}y$ and $z^n(1-zy)=0$.
 Finally, $zy=1$, making $J=A$, as desired.
 \end{proof}

\bigskip

\bibliographystyle{amsplain}

\begin{thebibliography}{10}

\bibitem{a1} D.D. Anderson, \textit{Commutative rings}, in : Jim Brewer, Sarah Glaz, William Heinzer, Bruce Olberding (Eds.), Multiplicative Ideal Theory in Commutative Algebra: A tribute to the work of Robert Gilmer, Springer, New York, 2006, pp. 1-20.

\bibitem{benmahou} D. Bennis, N. Mahdou and K. Ouarghi, \textit{Rings over which all modules are strongly Gorenstein projective},
Rocky Mountain Journal of Mathematics, Vol. 40 (3) (2010),  749 -
759.

\bibitem{benmah} D. Bennis and N. Mahdou, \textit{Global Gorenstein Dimensions}, Proc. Amer. Math. Soc., Vol. 138 (2),
(February 2010), 461-465.

\bibitem{benmah2} D. Bennis and N. Mahdou, \textit{Global Gorenstein dimensions of polynomial rings and of direct products of rings},
Houston Journal of Mathematics 25 (4), (2009), 1019-1028.

 \bibitem{chhMahtam}M. Chhiti, N. Mahdou and M. Tamekkante, \textit{Self injective amalgamated duplication along an ideal}, J. Algebra Appl. 12, 1350033 (2013).

\bibitem{AFF1} M. D'Anna, C. A. Finacchiaro, and M. Fontana, \textit{Amalgamated algebras along an ideal},
Comm Algebra and Aplications, Walter De Gruyter (2009), 241--252.

 \bibitem{AFF2} M. D'Anna, C. A. Finacchiaro, and M. Fontana; \textit{Properties
 of chains of prime ideals in amalgamated algebras along
 an ideal}, J. Pure Applied Algebra {\bf 214}(2010), 1633-1641

\bibitem{A} M. D'Anna; \textit{A construction of Gorenstein rings}; J. Algebra {\bf 306}(2) (2006), 507-519.

\bibitem{AF1} M. D'Anna and M. Fontana; \textit{The amalgamated duplication
 of a ring along a multiplicative-canonical ideal}, Ark. Mat. {\bf 45}(2) (2007), 241-252.

\bibitem{AF2} M. D'Anna and M. Fontana; \textit{An amalgamated duplication
 of a ring along an ideal: the basic properties}, J. Algebra Appl.
{\bf 6}(3) (2007),  443-459.
\bibitem{do}J.L. Dorroh, \textit{Concerning adjunctions to algebras}, Bull. Amer. Math. Soc. 38 (1932), 85-88.


\bibitem{nich} W. K. Nicholson and M. F. Yousif, \textit{Quasi-Frobenius rings}, Cambridge University Press, 2003.
\bibitem{Nagata} M. Nagata, \textit{Local Rings}, Interscience, New York, 1962.

\end{thebibliography}

\end{document}